\theoremstyle{definition}
\newtheorem{definition}{Definition} 
\theoremstyle{plain}
\newtheorem{lemma}[definition]{Lemma}
\newtheorem{theorem}[definition]{Theorem}
\newtheorem{corollary}[definition]{Corollary}
\begin{document}

\title{On tortkara triple systems}

\author{Murray Bremner}

\address{Department of Mathematics and Statistics, University of Saskatchewan, Canada}

\email{bremner@math.usask.ca}

\subjclass[2010]{Primary 17A40. 
Secondary 15-04, 15A72, 17-04, 17A30, 17A32, 17A50, 18D50, 20C30, 20-04, 68W30}


\keywords{Zinbiel algebras, tortkara algebras, triple systems, quadratic operads,
polynomial identities, computational algebra, representation theory of the symmetric group}

\thanks{The research of the author was supported by a Discovery Grant from NSERC, 
the Natural Sciences and Engineering Research Council of Canada.}

\begin{abstract}
The commutator $[a,b] = ab - ba$ in a free Zinbiel algebra (dual Leibniz algebra) is 
an anticommutative operation which satisfies no new relations in arity 3.
Dzhumadildaev discovered a relation $T(a,b,c,d)$ which he called the tortkara identity 
and showed that it implies every relation satisfied by the Zinbiel commutator in arity 4.
Kolesnikov constructed examples of anticommutative algebras satisfying $T(a,b,c,d)$ 
which cannot be embedded into the commutator algebra of a Zinbiel algebra.
We consider the tortkara triple product $[a,b,c] = [[a,b],c]$ in a free Zinbiel algebra 
and use computer algebra to construct a relation $TT(a,b,c,d,e)$ which implies every
relation satisfied by $[a,b,c]$ in arity 5.
Thus, although tortkara algebras are defined by a cubic binary operad (with no Koszul
dual), the corresponding triple systems are defined by a quadratic ternary operad (with 
a Koszul dual).
We use computer algebra to construct a relation in arity 7 satisfied by $[a,b,c]$ which 
does not follow from the relations of lower arity.
It remains an open problem to determine whether there are further new identities in arity $\ge 9$.
\end{abstract}

\maketitle


\section{Introduction}

Leibniz algebras were first studied by Blokh \cite{Blokh1965} under the name D-algebras
and were given their present name by Loday \cite{Loday1993}.
A vector space $L$ over a field $\mathbb{F}$ with a bilinear multiplication $L \times L \to L$
denoted $(a,b) \mapsto [a,b]$ is called a (left) Leibniz algebra if it satisfies the (left) 
derivation identity, 
\begin{equation}
[a,[b,c]] \equiv [[a,b],c] + [b,[a,c]],
\end{equation}
where the symbol $\equiv$ indicates that the equation holds for all values of the arguments.
The corresponding symmetric operad is quadratic and hence possesses a Koszul dual which was 
first studied by Loday \cite{Loday1995}.
Algebras over the Zinbiel (dual Leibniz operad) are defined by the (right) Zinbiel identity,
\begin{equation}
(ab)c \equiv a(bc) + a(cb).
\end{equation}
This identity first appeared in work of Sch\"utzenberger \cite[equation (S2), p.~18]{Schutzenberger1959}
on combinatorics of free Lie algebras.
In every Zinbiel algebra the anticommutator $a \circ b = ab + ba$ is associative (and commutative):
\begin{align*}
&
( a \circ b ) \circ c - a \circ ( b \circ c )
\\[-1pt]
&=
(ab)c + (ba)c + c(ab) + c(ba) - a(bc) - a(cb) - (bc)a - (cb)a
\\[-1pt]
&=
a(bc) + a(cb) + b(ac) + b(ca) + \cdots - b(ca) - b(ac) - c(ba) - c(ab)
= 0.
\end{align*}
Zinbiel algebras have therefore been called precommutative algebras; 
see Aguiar \cite{Aguiar2000}.
Zinbiel algebras are commutative dendriform (preassociative) algebras.
Kawski \cite{Kawski2009} discusses applications of Zinbiel algebras in control theory.
Livernet \cite{Livernet1998} studies a generalization of rational homotopy based on 
Leibniz and Zinbiel algebras.

An important feature shared by both the Leibniz and Zinbiel operads is that they are regular
in the sense that in every arity $n$ the homogeneous component is isomorphic to the 
regular $S_n$-module $\mathbb{F} S_n$.
Indeed, both the Leibniz and Zinbiel identities can be interpreted as directed rewrite rules,
\begin{equation}
[[a,b],c] \longmapsto [a,[b,c]] - [b,[a,c]],
\qquad\quad
(ab)c \longmapsto a(bc) + a(cb).
\end{equation}
These rules are the inductive step in the proof that every monomial in either operad can be 
rewritten as a linear combination of right-normed monomials (formed by using only left multiplications).
Thus in each arity we require only one association type, and so any (multilinear) monomial in
any association type can be identified with a linear combination of permutations of its arguments.
Regular parameterized one-relation operads have recently been classified by the author and 
Dotsenko \cite{BD2017}.

In both the Leibniz and the Zinbiel operads, the generating binary operation has no symmetry
(it is neither commutative nor anticommutative), so it is of interest to consider 
the same operads with a different set of generators, namely the commutator and anticommutator.
This process is called polarization and has been studied in detail by Markl \& Remm \cite{MR2006}.
Basic results on the polarizations of the Leibniz and Zinbiel operads are found in two papers by
Dzhumadildaev \cite{Askar2007,Askar2008}.
We make no further comment on the Leibniz case, since our focus in this paper is on the Zinbiel
operad, and in particular its binary and ternary suboperads generated by the Zinbiel commutator 
$[a,b] = ab-ba$ (the tortkara product) and the iterated Zinbiel commutator (the tortkara triple product):
\begin{equation}
\label{ttp}
\begin{array}{l@{\,}l}
[a,b,c] = [[a,b],c]
& 
= (ab)c - (ba)c - c(ab) + c(ba) 
\\[1mm]
&
= a(bc) + a(cb) - b(ac) - b(ca) - c(ab) + c(ba).
\end{array}
\end{equation}
The normal form of $[a,b,c]$ includes all permutations of $a, b, c$ in the association type
$\ast(\ast\ast)$; the sign pattern reflects the fact that both sides alternate in $a, b$.

Dzhumadildaev \cite{Askar2007} showed that the Zinbiel commutator does not satisfy any new 
(quadratic) relation in arity 3, but does satisfy new (cubic) relations of arity 4, all of which
are consequences of what is known as the tortkara identity:
\begin{equation}
\label{tortkaraidentity}
(ab)(cd) + (ad)(cb) \equiv J(a,b,c)d + J(a,d,c)b,
\end{equation}
where $J(a,b,c) = (ab)c + (bc)a + (ca)b$.
For further results on identical relations for Zinbiel algebras, and the speciality problem
for tortkara algebras, see Dzhumadildaev \& Tulenbaev \cite{DT2005}, Naurazbekova \& Umirbaev 
\cite{NU2010}, and Kolesnikov \cite{Kolesnikov2016}.
Since the suboperad of the Zinbiel operad generated by the commutator is cubic,
it is natural ask whether the operad of the corresponding triple systems is quadratic.

A similar case is that of Jordan algebras, where the original binary operad is 
defined by a commutative product $a \circ b$ satisfying the cubic Jordan identity,
which (if the characteristic of $\mathbb{F}$ is not 2) is equivalent to 
the multilinear identity,
\[
\begin{array}{l}
((a \circ b) \circ c) \circ d + ((b \circ d) \circ c) \circ a + ((d \circ a) \circ c) \circ b 
\equiv {}
\\[1pt]
(a \circ b) \circ (c \circ d) + (b \circ d) \circ (c \circ a) + (d \circ a) \circ (c \circ b).
\end{array}
\]
On the other hand, the Jordan triple product is defined by the following expression, which reduces 
to $abc + cba$ in any special Jordan algebra:
\[
\{a,b,c\} = \tfrac12 \big( ( a \circ b ) \circ c + a \circ ( b \circ c ) - b \circ ( a \circ c ) \big).
\]
This product satisfies a twisted form of the ternary derivation identity:
\[
\{a,b,\{c,d,e\}\} \equiv \{\{a,b,c\},d,e\} - \{c,\{b,a,d\},e\} + \{c,d,\{a,b,e\}\}.
\]
For further information on Jordan algebras and triple systems, 
we refer to McCrimmon \cite{McCrimmon2004} and Meyberg \cite{Meyberg1972};
see also Loos \& McCrimmon \cite{LM1977}.

In this paper we explain how computer algebra may be used to verify that 
the tortkara triple product \eqref{ttp} satisfies a (quadratic) relation of arity 5
which is not a consequence of the skewsymmetry in arity 3.
For the $S_5$-submodule of all such relations we determine an explicit generator  
(Theorem \ref{theorem5}) and the decomposition into irreducible representations
(Corollary \ref{corollary5}).
We then extend these computations to arity 7, and verify that there exist relations
in seven variables for the tortkara triple product which are new in the sense that 
they do not follow from the known relations of lower arity.
We determine an explicit new relation, which however does not generate all the new relations, 
and the decomposition into irreducible representations of the $S_7$-submodule of all new relations 
(Theorem \ref{theorem7}).
It is an open question whether there exist further new relations in arity $\ge 9$.

Although this paper is written from the point of view of algebraic operads, we require
very little background in that topic; the standard reference for the theoretical aspects
is Loday \& Vallette \cite{LV2012}, and for algorithmic methods the reader may refer to
the author \& Dotsenko \cite{BD2016}.


\section{Relations of arity 5}

We write $\mathbf{Zinb}$ for the symmetric operad generated by one binary operation denoted $ab$ 
with no symmetry satisfying the Zinbiel relation: 
\begin{equation}
\label{zinbiel}
(ab)c \equiv a(bc) + a(cb).
\end{equation}
We write $\mathbf{TTS}$ (tortkara triple system) for the suboperad of $\mathbf{Zinb}$ generated by 
the tortkara triple product \eqref{ttp}.
The operad $\mathbf{TTS}$  is a quotient of the free symmetric operad 
$\mathbf{SkewTS}$ generated by one ternary operation $[a,b,c]$ satisfying
\begin{equation}
[a,b,c] + [b,a,c] \equiv 0.
\end{equation}
We use the same symbol $[a,b,c]$ for the generators of both $\mathbf{TTS}$ and $\mathbf{SkewTS}$.

\begin{lemma}
\label{skewbasis5}
A basis for the homogeneous space $\mathbf{SkewTS}(5)$ of arity 5 consists of the following 
90 multilinear monomials where $\sigma \in S_5$ acts on $\{ a,b,c,d,e \}$:
\begin{itemize}[leftmargin=*]
\item
60 monomials $[[a^\sigma\!,b^\sigma\!,c^\sigma],d^\sigma\!,e^\sigma]$ where
$a^\sigma \prec b^\sigma$ in lex order, and 
\item
30 monomials $[a^\sigma\!,b^\sigma\!,[c^\sigma\!,d^\sigma\!,e^\sigma]]$ where
$a^\sigma \prec b^\sigma$ and $c^\sigma \prec d^\sigma$ in lex order.
\end{itemize}
\end{lemma}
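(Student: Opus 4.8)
The plan is to treat $\mathbf{SkewTS}$ as what it is by definition, namely the \emph{free} symmetric operad on one ternary generator $[a,b,c]$ modulo the single quadratic relation $[a,b,c]+[b,a,c]\equiv 0$. Because the operad is free apart from this skew-symmetry, every relation in arity $5$ is an operadic consequence of the generating relation, and I would produce the basis by reducing the free monomials under skew-symmetry and then counting orbits. First I would enumerate association types. Substituting a second copy of the generator into the first produces arity $3+3-1=5$, and the inner bracket may land in slot $1$, $2$, or $3$ of the outer bracket, giving three shapes
\[
[[\ast,\ast,\ast],\ast,\ast],\qquad [\ast,[\ast,\ast,\ast],\ast],\qquad [\ast,\ast,[\ast,\ast,\ast]].
\]
Labelling the five leaves by $a,b,c,d,e$ in all ways gives $3\cdot 5!=360$ monomials, and as elements of the free operad (before imposing skew-symmetry) these are linearly independent. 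The relation set is generated by the node-level sign changes $[x,y,z]\mapsto -[y,x,z]$ applied at the inner node and at the outer node of each monomial.

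Next I would use these sign changes as rewrite rules. At the outer node of the middle shape the bracket in slot $2$ is carried into slot $1$,
\[
[x,[u,v,w],z]\equiv -[[u,v,w],x,z],
\]
so the middle shape is eliminated in favour of the first. Within the first shape the outer sign change only regenerates the middle shape, so the sole normalization is at the inner node, forcing $a^\sigma\prec b^\sigma$ and leaving $5!/2=60$ monomials. Within the third shape the outer sign change swaps the two leaves occupying slots $1,2$ (and stays in the third shape), while the inner sign change swaps the first two leaves of the inner bracket, so both $a^\sigma\prec b^\sigma$ and $c^\sigma\prec d^\sigma$ can be imposed, leaving $5!/4=30$ monomials. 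This establishes the spanning claim and the count $60+30=90$.

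The main obstacle is linear independence, and I would settle it by recognising the quotient as a sign-twisted coinvariant space. Let $G=\langle s_{\mathrm{in}}\rangle\times\langle s_{\mathrm{out}}\rangle\cong S_2\times S_2$ act on the set $X$ of $360$ labelled trees, where $s_{\mathrm{in}}$ swaps the first two inputs of the inner node and $s_{\mathrm{out}}$ swaps the first two inputs of the outer node; here $s_{\mathrm{out}}$ interchanges the first and second shapes and fixes the third shape setwise. Because the computation is multilinear, every swap moves two distinct leaves (or changes the shape), so $G$ acts \emph{freely} on $X$ and $\mathbb{F}[X]$ is a free $\mathbb{F}[G]$-module of rank $360/4=90$. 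A short check of operadic composition shows that the relations in arity $5$ are exactly spanned by $m+s_{\mathrm{in}}m$ and $m+s_{\mathrm{out}}m$ for $m\in X$, which forces each generator of $G$ to act by $-1$; hence $\mathbf{SkewTS}(5)\cong \mathbb{F}[X]\otimes_{\mathbb{F}[G]}\mathrm{sgn}$ has dimension $90$.

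Freeness of the $G$-action also guarantees that no normal-form monomial is forced to equal its own negative, so none of them vanishes and the $60+30$ chosen representatives are a genuine basis rather than merely a spanning set. Equivalently, since $s_{\mathrm{in}}$ and $s_{\mathrm{out}}$ commute there are no nontrivial critical pairs, so the diamond lemma certifies that the normal forms are a basis of the free operad modulo skew-symmetry. I expect the enumeration of shapes and the counting of normal forms to be routine; the only point requiring care is the verification that $G$ acts freely (so that the orbit count is exactly $360/4$) and that the sign-twist produces no collapse, which is where multilinearity does the essential work.
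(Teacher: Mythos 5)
Your argument is correct, and it is a careful spelling-out of exactly the routine normal-form count that the paper dismisses with the one-word proof ``Immediate'': reduce the $3\cdot 5!=360$ labelled planar monomials by the skew-symmetry at the inner and outer nodes and check independence via the free action of $S_2\times S_2$ (equivalently, the induced sign representation). No gaps; the only substantive points you needed to verify --- that the arity-5 part of the operadic ideal is spanned by $m+s_{\mathrm{in}}m$ and $m+s_{\mathrm{out}}m$, and that the $G$-action is free because the leaves are distinctly labelled --- are both handled correctly.
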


\begin{proof}
Immediate.
\end{proof}

\begin{lemma}
\label{zinbielbasis5}
A basis for the homogeneous space $\mathbf{Zinb}(5)$ of arity 5 consists of the 120 multilinear
monomials $a^\sigma ( b^\sigma ( c^\sigma ( d^\sigma e^\sigma ) ) )$ where $\sigma \in S_5$ acts 
on $\{ a,b,c,d,e \}$; hence we may identify $\mathbf{Zinb}(5)$ with the regular 
$S_5$-module $\mathbb{F}S_5$.
\end{lemma}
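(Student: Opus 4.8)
The plan is to treat the two assertions of the lemma separately: that the 120 right-normed monomials span $\mathbf{Zinb}(5)$, and that they are in fact linearly independent and carry the regular module structure. For the spanning claim I would lean on the reduction procedure already recorded in the introduction. Reading the Zinbiel relation \eqref{zinbiel} as the directed rule $(ab)c \mapsto a(bc) + a(cb)$, every monomial of the operad rewrites to a linear combination of right-normed monomials, that is, of monomials built by iterated left multiplication. In arity 5 the right-normed multilinear monomials are exactly the $a^\sigma(b^\sigma(c^\sigma(d^\sigma e^\sigma)))$ for $\sigma \in S_5$, one association type with all arrangements of the arguments. Hence these 120 monomials span $\mathbf{Zinb}(5)$, and in particular $\dim \mathbf{Zinb}(5) \le 120$.

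For the reverse inequality, and thus for linear independence, I would invoke the regularity of the Zinbiel operad recalled in the introduction: in each arity $n$ one has $\mathbf{Zinb}(n) \cong \mathbb{F}S_n$, so $\dim \mathbf{Zinb}(5) = 5! = 120$. A spanning set whose cardinality equals the dimension of the space is automatically a basis, so the 120 monomials are independent. If one wants a self-contained justification of the dimension rather than citing regularity, the same count follows from the explicit model of the free Zinbiel algebra as the reduced tensor algebra $\overline{T}(V)$ under the half-shuffle product, whose multilinear component on $n$ generators is spanned freely by the $n!$ tensors $x_{\sigma(1)} \otimes \cdots \otimes x_{\sigma(n)}$; alternatively, confluence of the rewriting system for $(ab)c \mapsto a(bc) + a(cb)$ shows the distinct right-normed normal forms cannot collapse.

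For the identification with $\mathbb{F}S_5$, I would observe that $S_5$ acts on $\mathbf{Zinb}(5)$ by permuting the arguments $a,b,c,d,e$, and that applying any $\sigma \in S_5$ to a right-normed monomial again produces a right-normed monomial, with no further reduction required. This action therefore permutes the 120 basis monomials, and it does so simply transitively; that is precisely the left regular action, so $\mathbf{Zinb}(5) \cong \mathbb{F}S_5$ as $S_5$-modules.

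The main obstacle is the exact dimension count underlying linear independence: once the rewrite rule and the normal forms are in hand, spanning and the module identification are essentially formal, whereas ruling out hidden linear dependences among the 120 normal forms requires genuine input, supplied here by the cited regularity of $\mathbf{Zinb}$ (equivalently by the half-shuffle tensor-algebra model or by a verified confluence argument for the rewriting system).
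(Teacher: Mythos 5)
Your proposal is correct and follows essentially the same route as the paper: the paper's proof is exactly the spanning argument via the rewrite rule $(xy)z \mapsto x(yz) + x(zy)$, made concrete by listing the normal forms of all 14 association types and a recursive normalization algorithm, with linear independence left implicit via the regularity of $\mathbf{Zinb}$ recalled in the introduction. You supply the independence and regular-module steps explicitly (via regularity, or the half-shuffle tensor-algebra model), which the paper omits from the proof itself but relies on in the same way.
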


\begin{proof}
Repeated application of the Zinbiel relation \eqref{zinbiel} as the rewrite rule
\[
(xy)z \,\longmapsto\, x(yz) + x(zy),
\]
allows us to express any nonassociative monomial as a linear combination of right-normed 
monomials.
There are 14 binary association types in arity 5:
\[
\begin{array}{lllll}
(((\ast\ast)\ast)\ast)\ast, &\quad
((\ast(\ast\ast))\ast)\ast, &\quad
((\ast\ast)(\ast\ast))\ast, &\quad
(\ast((\ast\ast)\ast))\ast, &\quad
(\ast(\ast(\ast\ast)))\ast,
\\
((\ast\ast)\ast)(\ast\ast), &\quad
(\ast(\ast\ast))(\ast\ast), &\quad
(\ast\ast)((\ast\ast)\ast), &\quad
(\ast\ast)(\ast(\ast\ast)),
\\
\ast(((\ast\ast)\ast)\ast), &\quad
\ast((\ast(\ast\ast))\ast), &\quad
\ast((\ast\ast)(\ast\ast)), &\quad
\ast(\ast((\ast\ast)\ast)), &\quad
\ast(\ast(\ast(\ast\ast))).
\end{array}
\]
We present explicit formulas for the normal forms of these association types 
with the identity permutation of the arguments $a, b, c, d, e$; 
in the first nine cases, the sum is over all permutations $\sigma \in S_4$ 
acting on $\{ a, b, c, d \}$;
in the next three cases, the sum is over all permutations $\tau \in S_3$ acting on $\{ c, d, e \}$;
the notation $x \prec y$ indicates that $x$ must appear to the left of $y$:
\begin{alignat*}{2}
&
(((ab)c)d)e = a \sum b^\sigma ( c^\sigma ( d^\sigma e^\sigma ) ),
&\qquad
&
((a(bc))d)e = a \sum_{b \prec c} b^\sigma ( c^\sigma ( d^\sigma e^\sigma ) ),
\\
&
((ab)(cd))e = a \sum_{c \prec d} b^\sigma ( c^\sigma ( d^\sigma e^\sigma ) ),
&\qquad
&
(a((bc)d))e = a \sum_{b \prec c, \, b \prec d} b^\sigma ( c^\sigma ( d^\sigma e^\sigma ) ), 
\\
&
(a(b(cd)))e = a \sum_{b \prec c \prec d} b^\sigma ( c^\sigma ( d^\sigma e^\sigma ) ),
&\qquad
&
((ab)c)(de) = a \sum_{d \prec e} b^\sigma ( c^\sigma ( d^\sigma e^\sigma ) ),
\\
&
(a(bc))(de) = a \sum_{b \prec c, \, d \prec e} b^\sigma ( c^\sigma ( d^\sigma e^\sigma ) ),
&\qquad
&
(ab)((cd)e) = a \sum_{c \prec d, \, c \prec e} b^\sigma ( c^\sigma ( d^\sigma e^\sigma ) ), 
\\
&
(ab)(c(de)) = a \sum_{c \prec d \prec e} b^\sigma ( c^\sigma ( d^\sigma e^\sigma ) ),
&\qquad
&
a(((bc)d)e) = a \big( b \sum_{\tau \in S_3} ( c^\tau ( d^\tau e^\tau ) ) \big),
\\
&
a((b(cd))e) = a \big( b \sum_{c \prec d} ( c^\tau ( d^\tau e^\tau ) ) \big),
&\qquad
&
a((bc)(de)) = a \big( b \sum_{d \prec e} ( c^\tau ( d^\tau e^\tau ) ) \big),
\\
&
a(b((cd)e)) = a(b(c(de))) + a(b(c(de))),
&\qquad
&
\text{$a(b(c(de)))$ is already in normal form}.
\end{alignat*}
Figure \ref{zinbielnormalform} presents a recursive algorithm for computing 
the Zinbiel normal form of a nonassociative monomial.
\end{proof}

\begin{figure}[ht]
\fbox{
\begin{tabular}{l}
$\mathtt{ZNF}$: Zinbiel normal form of a nonassociative monomial.
\\
Input: a binary nonassociative monomial $m$.
\\
Output: the Zinbiel normal form of $m$ (sum of right-normed monomials).
\\
If $m$ is indecomposable then: 
\\
\qquad
Set $\mathtt{output} \leftarrow [ m ]$ (the list containing only $m$).
\\
else ($m$ is decomposable):
\\
\qquad
Write $m = wz$.
\\
\qquad 
If $w$ is indecomposable then:
\\
\qquad\qquad
Set $\mathtt{output} \leftarrow [ \; wt \mid t \in \mathtt{ZNF}( z ) \; ]$. 
\\
\qquad\qquad
(Notation: square brackets indicate a list or multiset.)
\\
\qquad
else ($w$ is decomposable):
\\
\qquad\qquad
Write $w = xy$.
\\
\qquad\qquad 
Set $\mathtt{output} \leftarrow [ \; ]$ (the empty list).
\\ 
\qquad\qquad 
For $r \in \mathtt{ZNF}( x )$ do
for $s \in \mathtt{ZNF}( y )$ do
for $t \in \mathtt{ZNF}( z )$ do:
\\ 
\qquad\qquad\qquad 
For $p \in \mathtt{ZNF}( r(st) )$ do: Append $p$ to $\mathtt{output}$.
\\
\qquad\qquad\qquad 
For $q \in \mathtt{ZNF}( r(ts) )$ do: Append $q$ to $\mathtt{output}$.
\\
Return $\mathtt{output}$.
\end{tabular}
}
\vspace{-3mm}
\caption{Recursive algorithm to compute the Zinbiel normal form}
\label{zinbielnormalform}
\end{figure}

\begin{lemma}
\label{expansionlemma}
The expansions of the ternary monomials $[[a,b,c],d,e]$ and $[a,b,[c,d,e]]$ have the following 
Zinbiel normal forms:
\begin{align*}
&
[[a,b,c],d,e] =
\left[
\begin{array}{c}
++++++++++++++++++++++++
\\[-2pt]
------------------------
\\[-2pt]
------++++++--++-+--++-+
\\[-2pt]
------++++++++--+---+++-
\\[-2pt]
------++++++++--+-++---+
\end{array}
\right]
\\
&
[a,b,[c,d,e]] =
\left[
\begin{array}{c}
++---+++++++-------+--++
\\[-2pt]
--+++-------+++++++-++--
\\[-2pt]
------++++++--++-+--++-+
\\[-2pt]
++++++------++--+-++--+-
\\[-2pt]
+-++---+--++++--+---++-+
\end{array}
\right]
\end{align*}
The notation is as follows: each normal form consists of a linear combination of the 120 right-normed
monomials consisting of all permutations of the arguments $a,b,c,d,e$; every coefficient is $\pm 1$.
The $5 \times 24$ sign matrices displayed above contain the sequence of coefficients in which the 
standard row-column order of matrix entries corresponds to the lex order of permutations.
\end{lemma}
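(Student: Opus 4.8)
The statement is a finite explicit computation, so the plan is to unfold the definitions, push everything into Zinbiel normal form, and read off the two matrices; the one assertion that is not purely mechanical---that all $120$ coefficients equal $\pm1$ with no cancellation---I would prove separately by a structural argument.

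First I would unfold the triple product. Since $[x,y,z]=[[x,y],z]$ and $[u,v]=uv-vu$, iteration gives
\[
[[a,b,c],d,e]=[[[[a,b],c],d],e],\qquad [a,b,[c,d,e]]=[[a,b],[[c,d],e]],
\]
each an alternating sum of $16$ binary Zinbiel monomials. Every one of these $16$ monomials belongs to one of the $14$ association types of Lemma~\ref{zinbielbasis5}, so the normal-form formulas there (equivalently, the algorithm $\mathtt{ZNF}$ of Figure~\ref{zinbielnormalform}) rewrite each as a signed sum of right-normed monomials. Summing over the $16$ terms and recording the coefficient of each of the $120$ right-normed monomials in lex order produces the two $5\times24$ matrices, read in row-major order with the row indexing the leading letter ($a,b,c,d,e$) and the column the lex rank of the remaining four letters.

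The structural point that makes the $\pm1$ pattern transparent---and the bookkeeping trustworthy---is the half-shuffle model of $\mathbf{Zinb}$: under the identification of Lemma~\ref{zinbielbasis5}, a right-normed monomial $x_1(x_2(\cdots(x_{n-1}x_n)))$ is the word $x_1x_2\cdots x_n$, and the Zinbiel product $u\cdot v$ keeps the first letter of $u$ and interleaves the remaining letters of $u$ with those of $v$ in all order-preserving ways. In this language $[a,b,c]$ is, by \eqref{ttp}, the signed sum of all six words on $\{a,b,c\}$, and $[c,d,e]$ likewise the signed sum of all six words on $\{c,d,e\}$. The inductive principle is that right multiplication $u\cdot v$ inserts the letters of $v$ after the first letter of $u$, preserving the relative order of the letters of $u$ and changing no signs, whereas left multiplication merely prepends and, inside a commutator, flips the global sign. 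Applying this to the chain $[a,b]\to[[a,b],c]\to\cdots\to[[a,b,c],d,e]$ (one new letter per step) and to $[a,b,[c,d,e]]=[[a,b],[c,d,e]]$ (a final step interleaving the pair $a,b$ into the six words of $[c,d,e]$), words arising from distinct predecessors never collide, so the support is forced to be all words on the relevant alphabet and the counts match exactly: $6\to24\to120$ in the first case, and $48$ words led by $a$ or $b$ together with $72$ led by $c,d,e$ in the second, again $120$. This proves that every coefficient is $\pm1$ and that the support is everything; the individual signs are then determined by tracking the inductive sign rule. As a check on the row labelling, one sees directly that for words led by $c$ both expansions assign $-1$ when $a$ precedes $b$ and $+1$ otherwise, independently of where $d,e$ fall, which is exactly why the third rows of the two matrices coincide in \verb|------++++++--++-+--++-+|.

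The main obstacle is therefore not the $\pm1$ claim, which the half-shuffle argument settles cleanly, but the faithful transcription of the $120$ individual signs into the row/column layout. I would carry out the sign tracking by computer, generating each expansion with the $\mathtt{ZNF}$ algorithm \emph{independently} of the half-shuffle reasoning, and then cross-validate the two methods---using the full-support and all-$\pm1$ predictions as a global sanity check and the coincidence of the third rows as a local one---before recording the matrices.
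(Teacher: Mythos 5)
Your proposal is correct, and its computational core coincides with the paper's proof: the paper expands each ternary monomial via equation \eqref{ttp} applied twice (obtaining 36 nonassociative monomials with coefficients $\pm 1$) and then normalizes each with the algorithm $\mathtt{ZNF}$ of Figure \ref{zinbielnormalform}, exactly the mechanical pass you describe (your count of 16 binary monomials comes from unfolding the commutators directly rather than using the 6-term normal form of $[a,b,c]$; either route feeds the same normalization step). What you add, and the paper does not, is an a priori structural explanation of the output: the realization of the free Zinbiel algebra as the half-shuffle algebra on words, under which right multiplication is an order-preserving insertion with no sign changes and left multiplication a prepension, so that the supports contributed by distinct predecessors are disjoint and the full support of all 120 words with coefficients $\pm 1$ is forced ($6 \to 24 \to 120$ for $[[a,b,c],d,e]$, and $24+24+72$ for $[a,b,[c,d,e]]$). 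This is a genuine improvement in robustness: the paper's proof simply reports that the computation yields $\pm 1$ coefficients on all 120 monomials, whereas your argument predicts this in advance and supplies independent checks (e.g.\ the agreement of the third rows, \texttt{------++++++--++-+--++-+}, governed by whether $a$ precedes $b$), which I have verified against the displayed matrices. The only caveat is the one you already acknowledge: the 240 individual signs themselves are still certified only by the machine computation, not by the structural argument.
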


\begin{proof}
For each ternary monomial, we first use equation \eqref{ttp} twice to compute the expansion,
obtaining a linear combination of 36 nonassociative monomials with coefficients $\pm 1$, and we then
use the algorithm of Figure \ref{zinbielnormalform} to compute the Zinbiel normal form of each
nonassociative monomial; the result is a linear combination with coefficients $\pm 1$ of all 120
right-normed monomials.
\end{proof}

\begin{lemma}
\label{E5lemma}
With respect to the ordered basis of Lemma \ref{skewbasis5}, the coefficient vectors of the relations 
of arity 5 satisfied by the tortkara triple product may be identified with the nonzero vectors in 
the nullspace of the $120 \times 90$ matrix $E_5$ defined as follows: the $(i,j)$ entry of $E_5$
is the coefficient of the $i$-th basis monomial for $\mathbf{Zinb}(5)$ in lex order 
(Lemma \ref{zinbielbasis5}) 
in the expansion of the $j$-th basis monomial for $\mathbf{SkewTS}(5)$ in lex order 
(Lemma \ref{skewbasis5}),
where the expansions are obtained by applying the appropriate permutation $\sigma \in S_5$ to
the arguments of the two basic expansions (Lemma \ref{expansionlemma}).
\end{lemma}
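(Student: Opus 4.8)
The plan is to recognize the expansion described in the statement as the arity-5 component of a single morphism of symmetric operads, and then to translate the property ``is a relation'' into the property ``lies in a nullspace.'' First I would introduce the morphism $\varepsilon \colon \mathbf{SkewTS} \to \mathbf{Zinb}$ that sends the ternary generator $[a,b,c]$ of $\mathbf{SkewTS}$ to the tortkara triple product \eqref{ttp} in $\mathbf{Zinb}$. This is well defined because $[[a,b],c] = -[[b,a],c]$, so the target operation satisfies the same skew-symmetry in its first two arguments as the generator. By construction $\mathbf{TTS}$ is the image of $\varepsilon$, and hence the relations of arity 5 satisfied by the tortkara triple product are exactly the elements of the kernel of the linear map $\varepsilon_5 \colon \mathbf{SkewTS}(5) \to \mathbf{Zinb}(5)$.

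Next I would fix coordinates. By Lemma \ref{zinbielbasis5} the codomain $\mathbf{Zinb}(5)$ has the ordered basis of $120$ right-normed monomials, and by Lemma \ref{skewbasis5} the domain $\mathbf{SkewTS}(5)$ has the ordered basis of $90$ skew monomials. Writing a general element as $x = \sum_{j=1}^{90} c_j\, m_j$ with $m_j$ the $j$-th basis monomial, linearity gives $\varepsilon_5(x) = \sum_{j=1}^{90} c_j\, \varepsilon_5(m_j)$. If the $j$-th column of a matrix $E_5$ records the coordinate vector of $\varepsilon_5(m_j)$ relative to the $120$ right-normed monomials, then the coordinate vector of $\varepsilon_5(x)$ is precisely $E_5\,(c_1,\dots,c_{90})^{\mathsf T}$. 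Thus $x$ is a relation, i.e. $\varepsilon_5(x)=0$, if and only if $(c_1,\dots,c_{90})^{\mathsf T}$ lies in the nullspace of $E_5$, and nonzero relations correspond to nonzero nullspace vectors; this is exactly the assertion of the lemma.

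What remains is to check that the columns of $E_5$ are the ones described, namely the expansions obtained by applying the appropriate $\sigma \in S_5$ to the two basic expansions of Lemma \ref{expansionlemma}. The key fact is that $\varepsilon$ is a morphism of symmetric operads, so $\varepsilon_5$ is $S_5$-equivariant: $\varepsilon_5(\sigma \cdot m) = \sigma \cdot \varepsilon_5(m)$. Each of the $90$ basis monomials of Lemma \ref{skewbasis5} arises from one of the two representatives $[[a,b,c],d,e]$ or $[a,b,[c,d,e]]$ by relabeling the arguments through some $\sigma$, so its expansion is the corresponding relabeling of one of the two sign matrices of Lemma \ref{expansionlemma}. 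Since applying $\sigma$ to a right-normed monomial yields another right-normed monomial (the association type is preserved and only the labels are permuted), no further Zinbiel reduction is needed: the permuted normal form is re-expressed in the $120$-element basis simply by relabeling and reindexing.

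The main obstacle I anticipate is bookkeeping rather than anything conceptual. The constraints $a^\sigma \prec b^\sigma$ (and $c^\sigma \prec d^\sigma$) built into the basis of Lemma \ref{skewbasis5} mean that when $\sigma$ places the first two slots out of lex order, one must apply the skew-symmetry relation $[a,b,c] \equiv -[b,a,c]$ to return to the chosen representative, which flips the sign of the entire column. Keeping these signs consistent with the lex-to-row-column correspondence used in Lemma \ref{expansionlemma} is the one place where an error could enter, and I would guard against it by spot-checking a few columns directly against a computation using equation \eqref{ttp} together with the algorithm of Figure \ref{zinbielnormalform}.
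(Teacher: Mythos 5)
Your proposal is correct and takes essentially the same route as the paper, which simply states that $E_5$ represents the expansion map $\mathbf{SkewTS}(5) \to \mathbf{Zinb}(5)$ in the given ordered bases; you spell out the underlying operad morphism, the linearity argument identifying relations with nullspace vectors, and the $S_5$-equivariance that reduces every column to a relabeling of the two basic expansions. The only minor over-caution is the worry about skew-symmetry sign flips: the basis of Lemma \ref{skewbasis5} already imposes $a^\sigma \prec b^\sigma$, so the permutations indexing the columns never require straightening.
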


\begin{proof}
The matrix $E_5$ represents the expansion map $\mathbf{SkewTS}(5) \longrightarrow \mathbf{Zinb}(5)$
with respect to the given ordered bases of the homogeneous components.
\end{proof}

\begin{theorem}
\label{theorem5}
The nullspace of the expansion matrix $E_5$ of Lemma \ref{E5lemma} has dimension 30 and is generated 
as an $S_5$-module by the following relation:
\begin{align*}
&
TT(a,b,c,d,e) =
  [[a,b,c],d,e] 
- [[a,c,b],d,e] 
+ [[b,c,d],e,a] 
+ [[b,d,c],a,e] 
\\[-1pt]
& {}
- [[b,e,d],c,a] 
- [[c,d,b],a,e] 
+ [[c,d,b],e,a] 
+ [[d,e,b],c,a]
+ [a,c,[b,d,e]] 
\\[-1pt]
& {}
- [a,e,[c,d,b]] 
- [c,d,[a,b,e]] 
+ [c,d,[a,e,b]] 
- [c,d,[b,e,a]] 
- [d,e,[b,c,a]].
\end{align*}
\end{theorem}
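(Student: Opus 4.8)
The plan is to establish two facts about the nullspace of $E_5$: that it has dimension exactly $30$, and that a single relation together with its $S_5$-orbit spans it. Both are fundamentally computational, so the proof structure is really a verification rather than a derivation, and my first step would be to make the dimension count rigorous. The matrix $E_5$ from Lemma \ref{E5lemma} is $120 \times 90$ with known $\pm 1$ entries (assembled from the two sign matrices of Lemma \ref{expansionlemma} by letting $\sigma \in S_5$ permute arguments), so I would compute its rank by row reduction over $\mathbb{Q}$ (or over a large prime to control coefficient growth, then confirm the rank is characteristic-independent). Since the nullspace has dimension $90 - \operatorname{rank}(E_5)$, verifying $\dim = 30$ amounts to confirming $\operatorname{rank}(E_5) = 60$. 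I would report this as the outcome of an exact integer linear-algebra computation, ideally with a rational or modular certificate so the claim does not rest on floating-point arithmetic.

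Next I would verify that the displayed element $TT(a,b,c,d,e)$ genuinely lies in the nullspace, i.e.\ that it maps to zero under the expansion map. Concretely, I would apply the Zinbiel normal-form expansions of Lemma \ref{expansionlemma} to each of the fourteen ternary monomials occurring in $TT$, with the indicated permutations of $\{a,b,c,d,e\}$ applied to the two base sign matrices, sum the resulting coefficient vectors in $\mathbb{F}S_5 \cong \mathbf{Zinb}(5)$, and check that the total is the zero vector. This is a finite, purely mechanical check: fourteen signed, permuted copies of length-$120$ vectors summing to zero.

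The substantive step is showing that the $S_5$-module generated by $TT$ fills the entire $30$-dimensional nullspace rather than a proper submodule. Here I would form the $S_5$-orbit of the coefficient vector of $TT$, that is, the $120$ vectors $TT^{\,\pi}$ obtained by permuting the arguments by each $\pi \in S_5$, record them as rows of a matrix, and compute the rank of the $\mathbb{F}$-span of this orbit. The claim is exactly that this span has dimension $30$ and therefore equals the full nullspace (both being subspaces of the same dimension, with the orbit contained in the nullspace by the previous step). I would carry this out by the same exact row-reduction used for the rank of $E_5$, again over $\mathbb{Q}$ with a modular cross-check.

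The main obstacle I anticipate is not conceptual but one of trust and reproducibility: the entire argument hinges on exact linear algebra over matrices too large to verify by hand, so the delicate part is ensuring the computation is correct and the $\pm 1$ sign matrices of Lemma \ref{expansionlemma} have been transcribed and permuted faithfully. To guard against transcription and indexing errors (especially the convention relating lex order of permutations to the standard row-column order of the $5 \times 24$ matrices), I would recompute the key ranks over at least two distinct large primes and over $\mathbb{Q}$, and would independently regenerate the expansion vectors directly from the rewrite algorithm of Figure \ref{zinbielnormalform} applied to \eqref{ttp}, rather than relying solely on the precomputed sign matrices. Agreement of these independent computations is what ultimately certifies both the dimension $30$ and the generation of the nullspace by the single relation $TT$.
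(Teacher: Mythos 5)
Your proposal is correct and follows essentially the same route as the paper: an exact linear-algebra verification that $\operatorname{rank}(E_5)=60$ (hence nullity $30$) together with a check that the $S_5$-orbit of $TT$ spans the full $30$-dimensional nullspace. The only difference is that the paper additionally explains how $TT$ was \emph{found} (Hermite normal form over $\mathbb{Z}$ plus LLL lattice reduction to extract a short generator), which is not needed for the verification itself.
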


\begin{proof}
Over a field $\mathbb{F}$, a basis for the nullspace of an $m \times n$ matrix $E$ of rank $r$
may be computed by the standard algorithm: first compute the row canonical form (RCF) of $E$, 
identify the $n{-}r$ free columns (those which do not contain one of the $r$ leading 1s), set the 
free variables equal to the standard basis vectors in $\mathbb{F}^{n-r}$, and solve for the 
leading variables.
However, for a matrix $E$ with integer entries, there is a fraction-free method: since $\mathbb{Z}$
is a Euclidean domain (and hence a PID), we may combine Gaussian elimination and the Euclidean
algorithm for GCDs to compute the Hermite normal form (HNF) of $E$.
More precisely, we compute the HNF, say $H$, of the transpose $E^t$, and keep track of the row
operations to produce an invertible integer matrix $U$ for which $UE^t = H$.
Since the bottom $n{-}r$ rows of $H$ are zero, it follows that the bottom $n{-}r$ rows of $U$
belong to (in fact form a basis of) the right nullspace of $E^t$ (which is the left nullspace
of $E$); we call this $(n{-}r) \times n$ matrix $N$.
Furthermore, we may remain in the category of free $\mathbb{Z}$-modules (instead of vector spaces
over $\mathbb{F}$), and apply the LLL algorithm for lattice basis reduction to $N$ to produce
another matrix $N'$ whose rows generate the same free $\mathbb{Z}$-module as the rows of $N$,
but whose Euclidean lengths are much smaller.
As a measure of the size of the basis, we use the base 10 logarithm of the product of the 
squared lengths of the basis vectors.
For the original basis $N$ this measure is $\approx 40.847$; after applying the LLL algorithm 
with the standard value $3/4$ of the reduction parameter, we obtain a basis with measure 
$39.851$; using the higher reduction parameter $999/1000$, we obtain a basis $N'$ with 
measure $\approx 35.656$.
The square lengths of the rows of the original basis $N$ are as follows 
(multiplicities are written as exponents):
$14^3$, $16^3$, $20^2$, $22$, $24^{13}$, $26$, $28^2$, $30$, $32^2$, $36^2$; for the reduced
basis $N'$ the square lengths are $14^{13}$, $16^{14}$, $18$, $20$, $22$.
The shortest basis vector has not changed, but the total length of the basis has dropped by
more than $10^5$; the shortest vector corresponds to the relation $TT(a,b,c,d,e)$.
We then apply all 120 permutations of $a, b, c, d, e$ to $TT(a,b,c,d,e)$, store
the coefficient vectors in a $120 \times 90$ matrix, compute its RCF over $\mathbb{Q}$, and
verify that this RCF coincides with the RCF of $N$. 
For more about the application of LLL to identical relations for nonassociative structures, see 
\cite{BP2009}.
\end{proof}

\begin{corollary}
\label{corollary5}
Let $[\lambda]$ denote the irreducible representation of $S_5$ corresponding to the partition
$\lambda$ of 5.
The nullspace of the expansion matrix $E_5$ of Lemma \ref{E5lemma} has the following $S_5$-module
structure where the exponents indicate multiplicities:
\[
[221] \oplus [311] \oplus [32]^2 \oplus [41]^2 \oplus [5].
\]
\end{corollary}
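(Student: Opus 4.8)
The plan is to compute the multiplicity $m_\lambda$ of each irreducible $[\lambda]$ in the nullspace $K = \ker E_5$ directly by character theory of $S_5$. By Theorem \ref{theorem5} we already possess an explicit basis of $K$ (the rows of the reduced matrix $N'$, equivalently the $S_5$-orbit of $TT(a,b,c,d,e)$), and $\dim K = 30$. Since $S_5$ has exactly seven conjugacy classes, with cycle types $1^5, 2\,1^3, 2^2 1, 3\,1^2, 3\,2, 4\,1, 5$ and sizes $1, 10, 15, 20, 20, 30, 24$, it suffices to evaluate the character $\chi_K$ of $K$ on one representative of each class and then to take inner products against the seven irreducible characters read off from the character table of $S_5$.

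First I would make the $S_5$-action on $\mathbf{SkewTS}(5)$ fully explicit. A permutation $\tau \in S_5$ relabels the arguments $a,b,c,d,e$ of each of the 90 basis monomials of Lemma \ref{skewbasis5}; whenever the relabelling violates the normalizing conditions $a^\sigma \prec b^\sigma$ (and $c^\sigma \prec d^\sigma$ in the second family), I would restore the canonical form using the skew-symmetry $[x,y,z] = -[y,x,z]$, recording the resulting sign. This realizes each $\tau$ as a $90 \times 90$ signed permutation matrix $P_\tau$. Because $E_5$ is a homomorphism of $S_5$-modules (Lemma \ref{E5lemma}), the action commutes with $E_5$ and hence preserves $K$.

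Next, for a representative $\tau$ of each conjugacy class I would apply $P_\tau$ to each of the 30 basis vectors of $K$ and re-express the images in that same basis by solving a linear system over $\mathbb{Q}$ (conveniently, by reducing against the row canonical form of $N$ from Theorem \ref{theorem5}); this yields a $30 \times 30$ matrix $Q_\tau$ representing $\tau$ on $K$, and $\chi_K(\tau) = \operatorname{tr} Q_\tau$. Collecting the seven values of $\chi_K$, the multiplicities are then $m_\lambda = \sum_{C} \frac{|C|}{120}\, \chi_K(C)\, \chi_\lambda(C)$, summed over the seven classes $C$. I expect these to come out as $m_{[221]} = m_{[311]} = m_{[5]} = 1$, $m_{[32]} = m_{[41]} = 2$, and $m_{[2111]} = m_{[11111]} = 0$, which I would confirm by the two sanity checks that every $m_\lambda$ is a nonnegative integer and that $\sum_\lambda m_\lambda \dim[\lambda] = 1\cdot 5 + 1 \cdot 6 + 2 \cdot 5 + 2 \cdot 4 + 1 \cdot 1 = 30$ agrees with Theorem \ref{theorem5}.

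The main obstacle is bookkeeping rather than theory: the sign accompanying each re-canonicalization must be tracked consistently across all seven class representatives, since a single sign error silently corrupts one or more of the traces $\chi_K(\tau)$. To guard against this I would cross-check the result by the Wedderburn (representation-matrix) method: for each $\lambda$ replace $E_5$ by its image $E_5^\lambda$ in the $\lambda$-isotypic block, using the matrix representation $\rho_\lambda$ of $S_5$, and recover $m_\lambda$ as the difference between the multiplicity of $[\lambda]$ in $\mathbf{SkewTS}(5)$ and the rank of $E_5^\lambda$; agreement of the two independent computations would certify the decomposition. As a further consistency check, since $K$ is cyclic — generated by the single element $TT(a,b,c,d,e)$ — each multiplicity must satisfy $m_\lambda \le \dim[\lambda]$, which is consistent with the values above.
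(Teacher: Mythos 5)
Your proposal follows essentially the same route as the paper: compute the $30\times 30$ matrices for conjugacy class representatives acting on the nullspace (using the basis from Theorem \ref{theorem5}), read off the character, and decompose against the $S_5$ character table; your added cross-checks (dimension count, the representation-matrix/rank method, and the cyclicity bound $m_\lambda \le \dim[\lambda]$) are sound and in fact the rank method is what the paper itself uses in arity 7. The argument is correct.
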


\begin{proof}
Using the reduced basis $N'$ from the proof of Theorem \ref{theorem5}, we compute the $30 \times 30$
matrices representing the action of a set of conjugacy class representatives on the nullspace of $E_5$, 
and obtain the character $[30, -6, 2, 0, 0, 0, 0]$.
Comparing this with the character table for $S_5$, we obtain the indicated decomposition.
\end{proof}


\section{Relations of arity 7}

The (somewhat redundant) consequences in arity 7 of the relation $TT(a,b,c,d,e)$ 
with respect to operadic partial composition are these eight relations:
\begin{equation}
\label{partial}
\left\{ \;
\begin{array}{l@{\;\;\;}l@{\;\;\;}l}
TT([a,f,g],b,c,d,e), &
TT(a,[b,f,g],c,d,e), &
TT(a,b,[c,f,g],d,e),
\\[2pt]
TT(a,b,c,[d,f,g],e), &
TT(a,b,c,d,[e,f,g]),
\\[2pt] {}
[TT(a,b,c,d,e),f,g], &
[f,TT(a,b,c,d,e),g], &
[f,g,TT(a,b,c,d,e)].
\end{array}
\right.
\end{equation}

\begin{theorem}
\label{theorem7}
The multilinear relation in arity 7 displayed in Figure \ref{newarity7} has 60 terms and coefficients 
$\pm 1$, $\pm 2$; it is satisfied by the tortkara triple product in the Zinbiel operad, but it is not 
a consequence of the skewsymmetry in arity 3 or the 14-term relation in arity 5 displayed in Theorem 
\ref{theorem5}.

Let $\mathrm{Con}(7) \subset \mathbf{SkewTS}(7)$ be the $S_7$-submodule generated by the consequences 
\eqref{partial} with respect to operadic partial composition of the relation $TT(a,b,c,d,e)$ displayed in 
Theorem \ref{theorem5}, and let $\mathrm{ConNew}(7) \subset \mathbf{SkewTS}(7)$ be the $S_7$-submodule 
generated by those consequences together with the 60-term relation in Figure \ref{newarity7}.
The quotient module $\mathrm{ConNew}(7)/\mathrm{Con}(7)$ has dimension 106 and the following 
multiplicity-free decomposition into the direct sum of irreducible representations:
\begin{equation}
\label{connew7}
\mathrm{ConNew}(7)/\mathrm{Con}(7) \cong [421] \oplus [322] \oplus [3211] \oplus [31111].
\end{equation}
The dimensions of the irreducible summands are respectively 35, 21, 35, 15.

Let $\mathrm{All}(7) \subset \mathbf{SkewTS}(7)$ be the $S_7$-module consisting of all relations 
satisfied by the tortkara triple product in arity 7.
The quotient module $\mathrm{All}(7)/\mathrm{Con}(7)$ has dimension 246; it decomposes as follows:
\begin{equation}
\label{isotypic7}
\mathrm{All}(7)/\mathrm{Con}(7) 
\cong 
[511] \oplus [421]^2 \oplus [4111]^2 \oplus [322] \oplus [3211]^2 \oplus [31111]^2.
\end{equation}
The dimensions of the isotypic components are respectively 15, 70, 40, 21, 70, 30.
\end{theorem}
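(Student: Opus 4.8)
The plan is to reduce every assertion in Theorem~\ref{theorem7} to an explicit linear-algebra computation over $\mathbb{Q}$, exactly paralleling the strategy used in arity~5, and then to organize those computations so that the representation-theoretic statements fall out from character calculations. The first step is to fix ordered bases. Just as in Lemma~\ref{skewbasis5}, I would construct a basis for $\mathbf{SkewTS}(7)$: there are now several binary-on-ternary association types for a ternary operation in arity~7 (the types $[[[\ast,\ast,\ast],\ast,\ast],\ast,\ast]$, $[[\ast,\ast,\ast],[\ast,\ast,\ast],\ast]$, $[[\ast,\ast,[\ast,\ast,\ast]],\ast,\ast]$, $[[\ast,\ast,\ast],\ast,[\ast,\ast,\ast]]$, $[\ast,\ast,[[\ast,\ast,\ast],\ast,\ast]]$, $[\ast,\ast,[\ast,\ast,[\ast,\ast,\ast]]]$, and so on), and within each type I would impose the lex-order normalization coming from the skewsymmetry $[a,b,c]+[b,a,c]\equiv 0$ to cut the $7!=5040$ permutations down to a canonical monomial basis. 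In parallel, Lemma~\ref{zinbielbasis5} generalizes immediately: $\mathbf{Zinb}(7)\cong\mathbb{F}S_7$ with right-normed basis $a^\sigma(b^\sigma(\cdots(f^\sigma g^\sigma)))$, so $\dim\mathbf{Zinb}(7)=5040$.

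Next I would build the arity-7 expansion matrix $E_7$, the analogue of $E_5$ from Lemma~\ref{E5lemma}. Each basic ternary monomial of $\mathbf{SkewTS}(7)$ is expanded by iterating equation~\eqref{ttp} and then reduced to Zinbiel normal form using the algorithm $\mathtt{ZNF}$ of Figure~\ref{zinbielnormalform}; the columns of $E_7$ are these coefficient vectors in $\mathbb{F}S_7$. The space $\mathrm{All}(7)$ is by definition the nullspace of $E_7$, so computing $\operatorname{rank}E_7$ over $\mathbb{Q}$ (or, for speed, modulo a large prime) gives $\dim\mathrm{All}(7)$. Separately I would assemble $\mathrm{Con}(7)$ by taking the eight partial-composition consequences~\eqref{partial} of $TT(a,b,c,d,e)$, applying all $7!$ permutations to each, and row-reducing the resulting coefficient matrix to get $\dim\mathrm{Con}(7)$ and a spanning set. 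Adjoining the $7!$ permutations of the displayed 60-term relation yields $\mathrm{ConNew}(7)$. A single membership check confirms the claim that the 60-term relation lies in $\mathrm{All}(7)$ (its coefficient vector is in the nullspace of $E_7$) but not in $\mathrm{Con}(7)$ (its row-reduced image against the spanning set of $\mathrm{Con}(7)$ is nonzero), establishing the first paragraph of the theorem.

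The representation-theoretic decompositions~\eqref{connew7} and~\eqref{isotypic7} I would obtain by the same character method as in Corollary~\ref{corollary5}. For each of the two quotient modules $\mathrm{ConNew}(7)/\mathrm{Con}(7)$ and $\mathrm{All}(7)/\mathrm{Con}(7)$, I choose a basis adapted to the quotient (e.g.\ by extending a row-reduced basis of $\mathrm{Con}(7)$ to one of the larger module and working in the complement), compute the matrices representing a system of conjugacy-class representatives of $S_7$ acting on that quotient basis, take traces to get the character value on each of the fifteen classes, and decompose against the $S_7$ character table. Matching multiplicities then gives the stated sums of irreducibles, whose dimensions $(35,21,35,15)$ and $(15,70,40,21,70,30)$ I would cross-check by verifying they sum to $106$ and $246$ respectively and are consistent with the computed ranks.

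The main obstacle is computational scale rather than conceptual difficulty. Arity~7 inflates the ambient space from $120$ to $5040$, so $E_7$ has $5040$ rows, the $S_7$-saturation of each relation produces $5040$-row matrices, and the exact rank and nullspace computations over $\mathbb{Q}$ risk severe intermediate coefficient growth. I would mitigate this by first computing ranks modulo one or more large primes to pin down all dimensions cheaply and to localize the pivot structure, then reconstruct the rational kernel only where an exact witness is genuinely needed; throughout, the skewsymmetry normalization and the sparsity of the $\pm1$ (and $\pm2$) coefficient vectors keep memory tractable. The delicate point to verify carefully is that the character values are computed on a correctly chosen and complete set of conjugacy-class representatives, since a single misidentified class would corrupt the decomposition; this is checked by confirming that the trace on the identity class reproduces the already-known dimension of each quotient.
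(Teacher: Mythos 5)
Your proposal is correct in substance and follows the same overall architecture as the paper: fix monomial bases for $\mathbf{SkewTS}(7)$ (the six ternary association types with their skew-symmetries, giving dimension $7560$) and $\mathbf{Zinb}(7)\cong\mathbb{F}S_7$, build the $5040\times 7560$ expansion matrix whose nullspace is $\mathrm{All}(7)$, saturate the eight partial-composition consequences under $S_7$ to get $\mathrm{Con}(7)$, and settle membership and dimensions by rank computations (the paper finds $\dim\mathrm{Con}(7)=4794$, $\dim\mathrm{All}(7)=5040$, hence the quotient dimension $246$; adjoining the 60-term relation raises the rank to $4900$, hence $106$). The paper works modulo $p=101$ throughout rather than over $\mathbb{Q}$, which matches your fallback plan. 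The one genuine methodological divergence is in how the decompositions \eqref{connew7} and \eqref{isotypic7} are obtained. You propose the character method of Corollary \ref{corollary5}: build explicit matrices for conjugacy-class representatives acting on a basis of each quotient and decompose the resulting character against the $S_7$ character table. The paper instead works irreducible-by-irreducible: for each partition $\lambda$ it uses Clifton's algorithm to form small blocks $R_\lambda(s)$ of size $d_\lambda\times d_\lambda$, stacks the matrices $[\mathrm{Sym}(\lambda)]$, $[\mathrm{Con}(\lambda)]$, $[\mathrm{New}(\lambda)]$, $[\mathrm{Exp}(\lambda)]$, and reads off multiplicities as differences of ranks (Figures \ref{mults7} and \ref{expnul7}). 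The paper's route avoids ever constructing the action of group elements on a $106$- or $246$-dimensional quotient sitting inside a $7560$-dimensional ambient space, and it doubles as an independent confirmation of the dimension counts; your route is conceptually simpler but computationally heavier, and you should beware that modulo $p=101$ a character value of a module of dimension up to $246$ is not determined by its residue, so that step would need a larger prime or exact arithmetic. Neither issue is a gap in correctness, only in efficiency and in one arithmetic precaution.
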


\begin{figure}[ht]
\[
\boxed{
\begin{array}{l}
   2  [dg[[efb]ca]]  
  -2  [dg[[efc]ba]]  
  +2  [ef[[dgb]ca]]  
  -2  [ef[[dgc]ba]]  
  +2  [dg[ef[abc]]]  
\\[2pt] {}
  -2  [dg[ef[acb]]]  
  +2  [ef[dg[abc]]]  
  -2  [ef[dg[acb]]]  
  -   [[abd]g[efc]]  
  -   [[abe]f[dgc]]  
\\[2pt] {}
  +   [[abf]e[dgc]]  
  +   [[abg]d[efc]]  
  +   [[acd]g[efb]]  
  +   [[ace]f[dgb]]  
  -   [[acf]e[dgb]]  
\\[2pt] {}
  -   [[acg]d[efb]]  
  +   [[adb]g[efc]]  
  -   [[adc]g[efb]]  
  +   [[adg]b[efc]]  
  -   [[adg]c[efb]]  
\\[2pt] {}
  +   [[aeb]f[dgc]]  
  -   [[aec]f[dgb]]  
  +   [[aef]b[dgc]]  
  -   [[aef]c[dgb]]  
  -   [[afb]e[dgc]]  
\\[2pt] {}
  +   [[afc]e[dgb]]  
  -   [[afe]b[dgc]]  
  +   [[afe]c[dgb]]  
  -   [[agb]d[efc]]  
  +   [[agc]d[efb]]  
\\[2pt] {}
  -   [[agd]b[efc]]  
  +   [[agd]c[efb]]  
  -   [[bda]g[efc]]  
  -   [[bdg]a[efc]]  
  -   [[bea]f[dgc]]  
\\[2pt] {}
  -   [[bef]a[dgc]]  
  +   [[bfa]e[dgc]]  
  +   [[bfe]a[dgc]]  
  +   [[bga]d[efc]]  
  +   [[bgd]a[efc]]  
\\[2pt] {}
  +   [[cda]g[efb]]  
  +   [[cdg]a[efb]]  
  +   [[cea]f[dgb]]  
  +   [[cef]a[dgb]]  
  -   [[cfa]e[dgb]]  
\\[2pt] {}
  -   [[cfe]a[dgb]]  
  -   [[cga]d[efb]]  
  -   [[cgd]a[efb]]  
  -   [[dga]b[efc]]  
  +   [[dga]c[efb]]  
\\[2pt] {}
  +   [[dgb]a[efc]]  
  -2  [[dgb]c[efa]]  
  -   [[dgc]a[efb]]  
  +2  [[dgc]b[efa]]  
  -   [[efa]b[dgc]]  
\\[2pt] {}
  +   [[efa]c[dgb]]  
  +   [[efb]a[dgc]]  
  -2  [[efb]c[dga]]  
  -   [[efc]a[dgb]]  
  +2  [[efc]b[dga]]  
\end{array}
}
\]
\vspace{-6mm}
\caption{New relation in arity 7 for the tortkara triple product}
\label{newarity7}
\end{figure}

\begin{proof}
For arity 7, the sizes of the matrices involved are too large to use rational arithmetic, so we use 
arithmetic modulo $p = 101$ to keep the time and space requirements within reasonable bounds.

\emph{Step 1:}
We first construct the consequences \eqref{partial} of the new relation of arity 5; the monomials
in these consequences require straightening (using anticommutativity in the first two arguments)
in order to ensure that they involve only the six standard association types in arity 7:
\begin{equation}
\label{types7}
\left\{ \quad
\begin{array}{l@{\qquad}l@{\qquad}l}
[[[\ast,\ast,\ast],\ast,\ast],\ast,\ast], & 
[[\ast,\ast,[\ast,\ast,\ast]],\ast,\ast], & 
[\ast,\ast,[[\ast,\ast,\ast],\ast,\ast]],   
\\[2pt] {}
[\ast,\ast,[\ast,\ast,[\ast,\ast,\ast]]], & 
[[\ast,\ast,\ast],[\ast,\ast,\ast],\ast], & 
[[\ast,\ast,\ast],\ast,[\ast,\ast,\ast]].   
\end{array}
\right.
\end{equation}
These association types have respectively 1, 2, 2, 3, 3, 2 skew-symmetries:
\begin{equation}
\label{symmetries7}
\left\{ \quad
\begin{array}{r@{\qquad}r}
[[[abc]de]fg] + [[[bac]de]fg] \equiv 0,
\\[1pt] {}
[[ab[cde]]fg] + [[ba[cde]]fg] \equiv 0,
&
[[ab[cde]]fg] + [[ab[dce]]fg] \equiv 0,
\\[1pt] {}
[ab[[cde]fg]] + [ba[[cde]fg]] \equiv 0,
&
[ab[[cde]fg]] + [ab[[dce]fg]] \equiv 0,
\\[1pt] {}
[ab[cd[efg]]] + [ba[cd[efg]]] \equiv 0,
&
[ab[cd[efg]]] + [ab[dc[efg]]] \equiv 0,
\\[1pt] {}
&
[ab[cd[efg]]] + [ab[cd[feg]]] \equiv 0,
\\[1pt] {}
[[abc][def]g] + [[bac][def]g] \equiv 0,
&
[[abc][def]g] + [[abc][edf]g] \equiv 0,
\\[1pt] {}
&
[[abc][def]g] + [[def][abc]g] \equiv 0,
\\[1pt] {}
[[abc]d[efg]] + [[bac]d[efg]] \equiv 0,
&
[[abc]d[efg]] + [[abc]d[feg]] \equiv 0.
\end{array}
\right.
\end{equation}
The total number of multilinear monomials is therefore
\[
\dim \mathbf{SkewTS}(7) 
= 
\left( \tfrac12 + \tfrac14 + \tfrac14 + \tfrac18 + \tfrac18 + \tfrac14 \right) 7!
=
7560.
\]

\emph{Step 2:}
We initialize to zero a block matrix of size $( 7560 + 5040 ) \times 7560$.
For each of the eight consequences \eqref{partial} we apply all permutations of the arguments,
straighten using skewsymmetry, store the resulting coefficient vectors in the lower block
($5040 \times 7560$), and compute the row canonical form (RCF); nonzero entries in the upper
block are retained for subsequent iterations.
The cumulative ranks obtained are 1785, 2730, 3150, 3150, 3150, 4410, 4410, 4794 which shows that
consequences 4, 5, 7 are not required as generators of the $S_7$-module $\mathrm{Con}(7)$ of 
all consequences of $TT(a,b,c,d,e)$ in arity 7 and that $\dim \mathrm{Con}(7) = 4794$.

\emph{Step 3:}
We insert the identity permutation of the arguments $a, \dots, g$ into the association
types \eqref{types7}, expand them using the tortkara triple product \eqref{ttp}, obtaining six
sums each with 216 terms and coefficients $\pm 1$, normalize the terms using the algorithm of
Figure \ref{zinbielnormalform}, and sort the resulting permutations in lex order, obtaining six 
sequences of $\pm$ signs of length 5040 which are the analogues in arity 7 of the two sequences
of length 120 in Lemma \ref{expansionlemma}.
For each association type $t$, we apply the permutations corresponding to the multilinear monomials
of type $t$ to the expansion of type $t$, and store the resulting sequences of $\pm$ signs in
the columns of the expansion matrix of size $5040 \times 7560$.
We compute the RCF and find that the rank is 2520, so the nullity is 5040.
The nullspace of this matrix is the $S_7$-module $\mathrm{All}(7)$ containing all the (multilinear) 
identical relations satisfied by the tortkara triple product in arity 7.
Comparing this with the result of Step 1, we see that the quotient module of new relations,
$\mathrm{New}(7) = \mathrm{All}(7) / \mathrm{Con}(7)$, has dimension $5040 - 4794 = 246$.

\emph{Step 4:}
From the full rank matrix of size $2520 \times 7560$ produced by Step 2, we extract a $5040 \times 7560$
matrix whose row space is the nullspace of the expansion matrix, and compute its RCF.
We then sort the rows of the RCF by increasing number of nonzero entries, obtaining a minimum of 17
and a maximum of 1397.
We then process these sorted rows as in Step 1 to determine which of the nullspace basis vectors 
do not belong to the $S_7$-module generated by the consequences of $TT(a,b,c,d,e)$ and the
previously processed nullspace basis vectors.
A relation with 60 terms increases the rank 
from $\dim \mathrm{Con}(7) = 4794$ to $\dim \mathrm{ConNew}(7) = 4900$ 
(this is the relation displayed in Figure \ref{newarity7});
a relation with 798 terms then increases the rank from 4900 to 4970;
and finally a relation with 985 terms increases the rank from 4900 to $\dim \mathrm{All}(7) = 5040$.
The coefficients modulo $p = 101$ of the relation with 60 terms are $1, 50, 51, 100$; 
multiplying by 2 and using symmetric representatives gives the coeffients $2, -1, 1, -2$.

\emph{Step 5:}
We confirm the results of Steps 1--3 using the representation theory of the symmetric group.
These computational methods have been described in detail elsewhere \cite{BMP2016}, so we omit
most of the details and provide only a brief outline.
The basic idea is to break the computation down into smaller subproblems corresponding to 
the irreducible representations of $S_7$.
Let $\lambda$ be a partition of 7, with corresponding simple $S_7$-module $[\lambda]$ of
dimension $d_\lambda$.
For each $s \in \mathbb{F} S_7$, let $R_\lambda(s)$ be the $d_\lambda \times d_\lambda$ matrix for
$s$ in the natural representation which may be computed according to Clifton's algorithm
\cite[Figure 3]{BMP2016}.
For each $\lambda$, we construct matrices $[\mathrm{Sym}(\lambda)]$, $[\mathrm{Con}(\lambda)]$,
$[\mathrm{New}(\lambda)]$ of sizes $13d_\lambda \times 6d_\lambda$, $8d_\lambda \times 6d_\lambda$,
$d_\lambda \times 6d_\lambda$ respectively, which contain the blocks $R_\lambda(s)$ representing
the terms in each association type for the symmetries \eqref{symmetries7}, the consequences 
\eqref{partial}, and the new 60-term relation (Figure \ref{newarity7}).
We stack these matrices vertically and compute the RCFs; the ranks give the multiplicities of
the irreducible representation $[\lambda]$ in the corresponding $S_7$-modules (Figure \ref{mults7}).
Comparing the last two rows we obtain the decomposition \eqref{connew7}.

\begin{figure}[ht]
\[
\begin{array}{cc@{\;\;}c@{\;\;}c@{\;\;}c@{\;\;}c@{\;\;}c@{\;\;}c@{\;\;}c@{\;\;}c@{\;\;}c@{\;\;}c@{\;\;}c@{\;\;}c@{\;\;}c@{\;\;}c}
& 7 & 61 & 52 & 51^2 & 43 & 421 & 41^3 & 3^21 & 32^2 & 321^2 & 31^4 & 2^31 & 2^21^3 & 21^5 & 1^7 
\\ \midrule
\begin{bmatrix}
\mathrm{Sym}(\lambda)
\end{bmatrix}
& 6 & 35 & 77 & 81 & 71 & 172 & 95 & 95 & 92 & 145 & 57 & 50 & 44 & 14 & 0
\\[1mm]
\begin{bmatrix}
\mathrm{Sym}(\lambda) \\ 
\mathrm{Con}(\lambda) 
\end{bmatrix}
& 6 & 35 & 80 & 84 & 79 & 193 & 108 & 116 & 114 & 188 & 78 & 75 & 74 & 31 & 5 
\\[3mm]
\begin{bmatrix}
\mathrm{Sym}(\lambda) \\ 
\mathrm{Con}(\lambda) \\
\mathrm{New}(\lambda) \\
\end{bmatrix}
& 6 & 35 & 80 & 84 & 79 & 194 & 108 & 116 & 115 & 189 & 79 & 75 & 74 & 31 & 5 
\\ \midrule
\end{array}
\]
\vspace{-7mm}
\caption{Multiplicities I of irreducible representations in arity 7}
\label{mults7}
\end{figure}

\begin{figure}[ht]
\[
\begin{array}{cc@{\;\;}c@{\;\;}c@{\;\;}c@{\;\;}c@{\;\;}c@{\;\;}c@{\;\;}c@{\;\;}c@{\;\;}c@{\;\;}c@{\;\;}c@{\;\;}c@{\;\;}c@{\;\;}c}
& 7 & 61 & 52 & 51^2 & 43 & 421 & 41^3 & 3^21 & 32^2 & 321^2 & 31^4 & 2^31 & 2^21^3 & 21^5 & 1^7 
\\ \midrule
\begin{bmatrix}
\mathrm{Exp}(\lambda)
\end{bmatrix}
& 0 & 1 & 4 & 5 & 5 & 15 & 10 & 10 & 11 & 20 & 10 & 9 & 10 & 5 & 1
\\[1mm]
\begin{bmatrix}
\mathrm{Nul}(\lambda) 
\end{bmatrix}
& 6 & 35 & 80 & 85 & 79 & 195 & 116 & 115 & 114 & 190 & 80 & 75 & 74 & 31 & 5 
\\ \midrule
\end{array}
\]
\vspace{-7mm}
\caption{Multiplicities II of irreducible representations in arity 7}
\label{expnul7}
\end{figure}

\emph{Step 6:}
For each $\lambda$, we construct the matrix $[\mathrm{Exp}(\lambda)]$ 
of size $d_\lambda \times 6d_\lambda$ in which
the $k$-th block $R_\lambda(s)^t$ is the transpose of the representation matrix 
for the normalized Zinbiel expansion of the skew-ternary monomial with the identity permutation of
$a, \dots, g$ in association type $k$.
(For an explanation of the transpose, see \cite[\S5]{BM2013}.)
From this we compute the matrix $[\mathrm{Nul}(\lambda)]$ in RCF whose row space 
is the isotypic component of type $[\lambda]$ in the $S_7$-module $\mathrm{All}(7)$.
Comparing the last row of Figure \ref{expnul7} with the second-last row of Figure \ref{mults7}
gives the decomposition \eqref{isotypic7}.
\end{proof}



\end{document}